\documentclass[10pt,a4paper,reqno]{amsart}
\usepackage[utf8]{inputenc}
\usepackage{amsmath}
\usepackage{amsthm}
\usepackage{amsfonts}
\usepackage{amssymb}
\usepackage{graphicx}
\usepackage{lmodern}
\usepackage[left=3.5cm,right=3.5cm,top=3.5cm,bottom=3.5cm]{geometry}


\usepackage{hyperref}

\usepackage{etoolbox}							
\patchcmd{\section}{\scshape}{\bfseries}{}{}
\makeatletter
\renewcommand{\@secnumfont}{\bfseries}
\makeatother

\newtheorem{theorem}{Theorem}					
\newtheorem{lemma}{Lemma}[theorem]

\numberwithin{equation}{section}



\author{Fabian Schneider}
\title[How likely are two recurrent events to overlap during a given time?]{\small How likely are two independent recurrent events to occur simultaneously during a given time? \normalsize}
\address{Hamburg, Germany}
\email{\url{fabian.schneider@desy.de}, \url{research@fschneider.info}}


\urladdr{\url{http://www.fschneider.info/}}
\date{December 22, 2016}
\keywords{Applied Statistics, Probability}
\subjclass[2010]{60C05, 62P99}

\begin{document}

\begin{abstract}
We determine the probability $P$ of two independent events $A$ and $B$, which occur randomly $n_A$ and $n_B$ times during a total time $T$ and last for $t_A$ and $t_B$, to occur simultaneously at some point during $T$. Therefore we first prove the precise equation 
\begin{equation*}
P^* = \dfrac{t_A+t_B}{T} - \dfrac{t_A^2+t_B^2}{2T^2}
\end{equation*}
for the case $n_A = n_B = 1$ and continue to establish a simple approximation equation
\begin{equation*}
P \approx 1 -  \left( 1 - n_A \dfrac{t_A + t_B}{T} \right)^{n_B}
\end{equation*}
for any given value of $n_A$ and $n_B$. Finally we prove the more complex universal equation
\begin{equation*}
P = 1 - \dfrac{ \left( T^+ - t_A n_A - t_B n_B \right)^{n_A + n_B} }{ \left( T^+ - t_A n_A  \right)^{n_A} \left( T^+ - t_B n_B \right)^{n_B} } \pm E^\pm,
\end{equation*}
which yields the probability for $A$ and $B$ to overlap at some point for any given parameter, with $T^+ := T + \frac{t_A + t_B}{2}$ and a small error term $E^\pm$.
\end{abstract}

\maketitle

\section{Introduction}
\label{sec:intro}

Let us consider two independent and recurring events $A$ and $B$, which take place during a total time $T$. The events occur exactly $n_A$ and $n_B$ times randomly during this total time and last for $t_A$ and $t_B$ until the next event occurrence may happen. Since both events occur independently, they may eventually take place simultaneously at some point during $T$ -- i.e. they overlap. This may happen in case an occurrence
\begin{enumerate}
\item of $A$ and $B$ take place at the same moment,
\item of $A$ takes places while an occurrence of $B$ still takes place or
\item of $B$ takes place while an occurrence of $A$ still takes place.
\end{enumerate}
We divide all time parameters in discrete units $\Delta \longrightarrow 0$ and let $P$ denote the probability for at least one overlap -- i.e. it exists at least one time unit, during which both events take place. We will derive expressions to determine $P$ under given parameters $T$, $t_A$, $t_B$, $n_A$ and $n_B$. The following examples and notes clarify the setting and will be revisited later: \\ \\
\emph{Example I} --- John works inside his office for 2 hours. A blue car will occur 10 times on the nearby street and remains visible for 1 minute each time. John looks outside 5 times for 3 minutes each. How likely is John going to see a blue car?: $T = 120 ~min$, $t_A = 3 ~min$, $t_B = 1 ~min$, $n_A = 5$,  $n_B = 10$ and we are about to determine $P(120,3,1,5,10)$. \\ \\
\emph{Example II} --- We choose a total time of 3 seconds. Both events happen only once for 1 second during this time: How likely are both events to overlap at some point?: $T = 3 ~sec$, $t_A = 1 ~sec$, $t_B = 1 ~sec$, $n_A = 1$,  $n_B = 1$ and we are about to determine $P(3,1,1,1,1)$.\footnote{Note: Since all time values are given with same unit, we will stop to note the unit in further calculations.} \\

\newpage
\noindent Definitions and Notes:
\begin{enumerate}
\renewcommand{\theenumi}{\alph{enumi}}
\item \textit{If the duration time of $A$ and $B$ differs, let always $A$ denote the event with the longer duration time: $t_A  \geq t_B$. \\
\item The duration of the event-overlap is not relevant -- i.e. an overlap of a few minutes or a millisecond are both counted as valid overlap. \\
\item We consider a possible overlap with the end of the total time as more natural and thus valid - e.g. John may start to look outside 4 seconds before the 120 minutes total time are over. \\
\item Both events occur precisely $n_A$ and $n_B$ times - we do not perceive these parameters as random in the proofs. \emph{When} the events occur is the only random aspect first. We remark in Section \ref{sec:nn} how the equation changes with $n_A$ and $n_B$ as random. }\\
\item \textit{Occurrences of the same event must not overlap with themselves -- e.g. John must not start to look outside while he is already looking. Thus $T \leq t_A \cdot n_A$ implies $P = 1$. }
\end{enumerate}

\section{Notation}\label{sec:not}

\noindent Overview of main notation: \\

\begin{tabular}{rcl}
$A$, $B$ & -- & labels of the events \\
$T$ & -- & total time \\
$t_A$, $t_B$ & -- & duration of event $A$, $B$ \\
$n_A$, $n_B$ & -- & number of occurrences of event $A$, $B$ during total time \\
$P$ , $\bar{P}$ & -- & probability for an (no) overlap \\
$\Delta$ & -- & infinitesimal unit, divides time in discrete pieces \\
&
\end{tabular}

\noindent Overview of specific notations for local purpose: \\

\begin{tabular}{rcl}
$S_1$, $S_2$ & -- & $\sum_{i=0}^{\frac{t_B}{\Delta} - 2} \left( \frac{t_A}{\Delta} + i \right)$ \ ,\ \ $\sum_{j=1}^{\frac{t_A}{\Delta} - 1} \left( \frac{t_A}{\Delta} + \frac{t_B}{\Delta} - 1 - j\right)$ \\ 
$T'$ & -- & $T / \Delta$ \\
$N_A$ , $N_B$ & -- & $\left(\frac{t_A}{\Delta} - 1\right) \cdot n_A$ \ ,\ \ $\left(\frac{t_B}{\Delta} - 1\right) \cdot n_B$ \\
$M_A$, $M_B$, $M$ & -- & $T' - N_A - n_A$ \ ,\ \ $T' - N_B - n_B$ \ ,\ \ $T' - N_A - n_A - N_B - n_B$ \\
$T^+$ & -- & $T + \frac{t_A + t_B}{2}$ \\
$\alpha_A $, $\alpha_B $ & -- & $ T + t_A - t_A n_A - t_B n_B$ \ ,\ \ $ \alpha_A - \frac{t_A-t_B}{2}$ \\
$\tau$ & -- & $\max\{ t_An_A , t_Bn_B \}$
\end{tabular}

\section{Precise Equation for $n_A = n_B = 1$}\label{sec:preciseEqn}

\noindent We start with deriving an equation for the case $n_A = n_B = 1$, since
\begin{enumerate}
\item this case has shown to occur unexpectedly often in applications 
\item prepares main techniques for the proof of Theorem \ref{theo:uniq} and
\item the equation yields precise results without error term\footnote{although $\vert t_A-t_B \vert > 0$ (see Section \ref{sec:preciseEqn})}.
\end{enumerate}
Let $P^*(T,t_A,t_B)$ denote the probability function for this case.

\begin{theorem}
The probability\footnote{Note: Due to reading purpose we will use just '$A$' instead of 'the $A$ occurrence(s)' -- same for $B$.} of $A$ and $B$ to overlap at some point during the total time $T$ in case of $n_A=n_B=1$ is given by
\begin{equation*}
P^*(T,t_A,t_B) = \dfrac{t_A+t_B}{T} - \dfrac{t_A^2+t_B^2}{2T^2}.
\end{equation*} 
\end{theorem}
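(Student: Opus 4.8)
The plan is to recast the discrete setup as a purely geometric problem on a square and read off $P^*$ as a ratio of areas. With $n_A = n_B = 1$ the only randomness is the single starting position of each event, so I write $x$ for the start of $A$ and $y$ for the start of $B$ and model $(x,y)$ as uniformly distributed on the square $[0,T]^2$. This is exactly the $\Delta \to 0$ limit of choosing each starting unit uniformly among the $T/\Delta$ units of the total time, and the convention of note (c) --- that an occurrence may begin near the end of $T$ and overhang it --- is precisely what keeps the range at $[0,T]$ rather than shrinking it to $[0, T - t_A]$. Every probability then becomes an area divided by $T^2$.

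Next I would translate ``overlap'' into a condition on $(x,y)$. The two occurrences cover the intervals $[x, x+t_A]$ and $[y, y+t_B]$, and these meet exactly when $x \le y + t_B$ and $y \le x + t_A$, i.e.\ when $-t_A \le x - y \le t_B$. The three cases (1)--(3) listed in the introduction are merely the subcases of this one inequality, so nothing is lost by working with it directly.

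It is cleaner to compute the complementary probability $\bar{P}$ of no overlap. The non-overlap set splits into the two disjoint triangles $\{\, y - x > t_A \,\}$ and $\{\, x - y > t_B \,\}$ inside $[0,T]^2$; each is right-isosceles, with legs of length $T - t_A$ and $T - t_B$ respectively (here one uses $t_A, t_B \le T$, which holds away from the trivial case $P = 1$ of note (e)). Their combined area is $\tfrac12\big((T-t_A)^2 + (T-t_B)^2\big)$, so that $P^* = 1 - \bar{P} = 1 - \frac{(T-t_A)^2 + (T-t_B)^2}{2T^2}$. Expanding the two squares and simplifying collapses this immediately to $\frac{t_A+t_B}{T} - \frac{t_A^2+t_B^2}{2T^2}$, the claimed expression.

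I expect the genuine obstacle to lie entirely in the modeling step rather than in the algebra: one must justify that $(x,y)$ is uniform on $[0,T]^2$ and fix the boundary convention unambiguously, then confirm that the discrete counting --- sums such as $S_1$ and $S_2$ over the $T/\Delta$ units --- really does reproduce these triangle areas as $\Delta \to 0$, with the off-by-one boundary terms contributing only $O(\Delta)$ and vanishing in the limit. Once the overlap inequality is in hand, the remainder is routine area computation.
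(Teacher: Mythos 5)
Your proof is correct, and it reaches the result by a genuinely different route than the paper. The paper works entirely in the discrete model: it fixes the position $x_A$ of $A$, counts the number of starting units for $B$ that produce an overlap (namely $\frac{t_A}{\Delta}+\frac{t_B}{\Delta}-1$ in the interior, fewer near the two ends of $T$), assembles the total count as the two boundary sums $S_1$, $S_2$ plus an interior term, evaluates the sums with the arithmetic series formula of Lemma \ref{lem:gauss}, and only then lets $\Delta \to 0$. You instead pass to the continuum at the outset, identify the overlap event with the strip $-t_A \le x-y \le t_B$ in $[0,T]^2$, and compute the complement as the area of two right-isosceles triangles with legs $T-t_A$ and $T-t_B$; the identity $(T-t_A)^2+(T-t_B)^2 = 2T^2 - 2T(t_A+t_B) + t_A^2 + t_B^2$ then gives the formula in one line. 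The two computations agree because your triangles are exactly the continuum limits of the deficits that the paper's sums $S_1$ and $S_2$ account for, and the $O(\Delta)$ off-by-one terms you flag are precisely the $-\Delta$ and $-2\Delta$ corrections that the paper carries explicitly and discards in the limit. Your version is shorter, makes the structure of the answer transparent (the $\frac{t_A^2+t_B^2}{2T^2}$ term is visibly the two corner triangles), and sidesteps the summation bookkeeping; the paper's version has the virtue of staying in the discrete model it set up in the introduction and of rehearsing the counting techniques that are reused in the proof of Theorem \ref{theo:uniq}. Your boundary convention (start times uniform on all of $[0,T]$, occurrences allowed to overhang the end) matches the paper's note (c) and its count of $T/\Delta$ admissible starting units, so the modeling step you single out as the main risk is in fact settled the same way the paper settles it.
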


\begin{lemma}\label{lem:gauss}
We remember the well known arithmetic summation formula
\begin{equation*}
1 + 2 + ... + n = \sum\limits_{k=1}^n k = \frac{1}{2}n \cdot (n+1).
\end{equation*}
\end{lemma}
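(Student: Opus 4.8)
The plan is to establish the identity $\sum_{k=1}^n k = \tfrac{1}{2}\,n(n+1)$ by the classical reversal-and-add argument (attributed to Gauss), which makes the closed form transparent without any appeal to induction. First I would abbreviate the sum by setting $S := \sum_{k=1}^n k = 1 + 2 + \cdots + n$. Then I would write the very same sum a second time, but with the summation index running in reverse order, namely $S = n + (n-1) + \cdots + 1$; this is legitimate since addition is commutative and the terms are merely relabelled.

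The key step is to add these two expressions term by term. Aligning the $k$-th term of the first line with the corresponding term of the second line, each of the $n$ resulting column sums equals $k + (n-k+1) = n+1$, independently of $k$. Summing over all $n$ columns therefore gives $2S = n\,(n+1)$, and dividing by $2$ yields the claimed formula $S = \tfrac{1}{2}\,n(n+1)$.

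Since the statement is entirely elementary, there is no genuine obstacle to overcome; the only point worth a word of care is that this pairing treats $n$ even and $n$ odd uniformly, because we add two separate copies of the sum rather than folding one copy onto itself, so no middle term ever needs special handling. As a self-contained alternative I would sketch a one-line induction on $n$: the base case $n=1$ reads $1 = \tfrac{1}{2}\cdot 1 \cdot 2$, and the inductive step follows at once from $\tfrac{1}{2}\,n(n+1) + (n+1) = \tfrac{1}{2}\,(n+1)(n+2)$, so I regard the reversal argument as the cleaner presentation for the paper.
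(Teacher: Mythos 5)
Your reversal-and-add argument is correct and complete: writing $S = 1 + 2 + \cdots + n$ alongside its reversal $S = n + (n-1) + \cdots + 1$ and pairing the columns gives $2S = n(n+1)$, and your remark that adding two copies avoids any parity case-split on $n$ is a valid point of care. For comparison, the paper does not prove this lemma at all --- it simply cites a reference --- so your proof is strictly more self-contained than what appears in the paper; in effect you have supplied the classical Gauss pairing argument that the citation presumably contains. Your supplementary induction sketch is also correct (the identity $\tfrac{1}{2}n(n+1) + (n+1) = \tfrac{1}{2}(n+1)(n+2)$ is exactly the inductive step), though for a result this standard either argument alone would suffice, and in the context of this paper the lemma serves only as a bookkeeping tool for simplifying the sums $S_1$ and $S_2$ in the proof of the precise equation, so the citation-only treatment is defensible as well.
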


\begin{proof}
See \cite{SUM}.
\end{proof}

\begin{proof}[Proof of Theorem.]
First, we divide the time in pieces of size $\Delta$. Thus, we get $T/\Delta$ as limited number of positions at which the events may start, so there are $(T/\Delta)^2$ possible arrangements. By determining the number of positions $x_B(x_A)$ of $B$ to overlap with $A$ for every possible position $x_A$ of $A$, we are able to obtain the probability $P^*$ with
\begin{equation}\label{eq:p1sum}
P^* = \lim_{\Delta \longrightarrow 0} \dfrac{\sum_{x_A} x_B(x_A)}{(T/\Delta)^2}.
\end{equation}
In general, if $A$ is located somewhere in the center of $T$ -- i.e. in a sufficient distance to beginning and end of the total time -- there are
\begin{equation}\label{eq:possPositions}
\frac{t_A}{\Delta} + \frac{t_B}{\Delta} - 1
\end{equation}
possible positions $x_B$ for $B$ to overlap with $A$. This holds true for 
\begin{equation*}
\frac{T}{\Delta} - \frac{t_A}{\Delta} - \frac{t_B}{\Delta} + 1
\end{equation*}
positions $x_A$ of $A$, since there are at $\frac{t_B}{\Delta}-1$ positions (beginning of $T$) and at $\frac{t_A}{\Delta}-1$ positions (end of $T$) less possible positions for $B$ to overlap with $A$. Therefore (\ref{eq:p1sum}) becomes
\begin{align} 
\nonumber P^* &= \lim_{\Delta \longrightarrow 0} \dfrac{ \sum\limits_{i=0}^{\frac{t_B}{\Delta} - 2} \left( \frac{t_A}{\Delta} + i \right) + \left( \frac{t_A}{\Delta} + \frac{t_B}{\Delta} - 1 \right) \left( \frac{T}{\Delta} - \frac{t_A}{\Delta} - \frac{t_B}{\Delta} + 1 \right) + \sum\limits_{j=1}^{\frac{t_A}{\Delta} - 1} \left( \frac{t_A}{\Delta} + \frac{t_B}{\Delta} - 1 - j\right) }{(T/\Delta)^2} \\[1em]
 &= \lim_{\Delta \longrightarrow 0} \dfrac{ \Delta^2 S_1 + \left( t_A + t_B - \Delta \right) \left( T - t_A - t_B + \Delta \right) + \Delta^2 S_2 }{T^2}. \label{eq:frac}
\end{align}
By applying Lemma \ref{lem:gauss} the first sum simplifies to
\begin{align*}
\Delta^2 S_1 = \Delta^2 \sum\limits_{i=0}^{\frac{t_B}{\Delta} - 2} \left( \frac{t_A}{\Delta} + i \right) &= \left( t_B - \Delta \right) t_A + \Delta^2 \sum\limits_{i=1}^{\frac{t_B}{\Delta} - 2} i \\
&= \left( t_B - \Delta \right) t_A + \frac{1}{2}\left( t_B - \Delta \right) \left( t_B - 2\Delta \right).
\end{align*}
and the second sum to
\begin{align*}
\Delta^2 S_2 = \Delta^2 \sum\limits_{j=1}^{\frac{t_A}{\Delta} - 1} \left( \frac{t_A}{\Delta} + \frac{t_B}{\Delta} - 1 - j\right) &= \left( t_A - \Delta \right) \left( t_A + t_B - \Delta \right) - \Delta^2 \sum\limits_{j=1}^{\frac{t_A}{\Delta} - 1} j \\
&= \left( t_A - \Delta \right) \left( t_A + t_B - \Delta \right) - \frac{1}{2}t_A\left( t_A - \Delta \right).
\end{align*}
Inserting these results in (\ref{eq:frac}) and considering $\Delta \longrightarrow 0$ yields
\begin{align*}
P^* &=  \dfrac{ t_At_B + \frac{1}{2}t_B^2 +(t_A + t_B)( T - t_A - t_B) + t_A(t_A + t_B) - \frac{1}{2}t_A^2 }{T^2} \\[0.8em]
 &= \dfrac{ t_At_B + \frac{1}{2}t_B^2 +(t_A + t_B)( T - t_B) - \frac{1}{2}t_A^2 }{T^2} \\[0.8em]
 &=  \dfrac{ t_AT + t_BT - \frac{1}{2}t_A^2 - \frac{1}{2}t_B^2 }{T^2} =  \dfrac{ t_A + t_B }{T} - \dfrac{ t_A^2 + t_B^2 }{2T^2},
\end{align*}
which concludes the proof.
\end{proof}

\begin{center}
--- Examples ---
\end{center}

\noindent \textit{Example I (modified)} --- John looks outside once for 5 minutes and a blue car occurs once for 2 minutes during 1 hour. He will see a blue car with a probability of about $11.26 ~\%$:
\begin{equation*}
P^*(60,5,2) = \dfrac{5+2}{60} - \dfrac{5^2+2^2}{2\cdot 60^2} \approx 0.1126~.
\end{equation*}

\noindent \textit{Example II} --- Two events with $t_A = t_B = 1~sec$ will overlap during a total time $T = 3~sec$ with a probability of about $55.56 ~\%$:
\begin{equation*}
P^*(3,1,1) = \dfrac{1+1}{3} - \dfrac{1^2+1^2}{2\cdot 3^2} \approx 0.5556~.
\end{equation*}

\section{Approximation of Universal Equation}\label{sec:app}

Before we derive a more complex universal equation (see Section \ref{sec:unieq}), we will find an approximation, which shall apply for any given $n_A$ and $n_B$.

\begin{theorem}
The probability of $A$ and $B$ to overlap at some point during the total time $T$, with $A$ and $B$ occurring $n_A$ and $n_B$ times for $t_A$ and $t_B$, can be approximated by
\begin{equation*}
P(T,t_A,t_B,n_A,n_B) \approx 1 -  \left( 1 - n_A  \dfrac{t_A + t_B}{T} \right)^{n_B}.
\end{equation*}
\end{theorem}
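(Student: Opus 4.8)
The plan is to compute the complementary no-overlap probability $\bar P = 1 - P$ by building it up from the single-pair result of Section~\ref{sec:preciseEqn} and treating the individual occurrences as approximately independent. The guiding observation is that the factor $n_A \frac{t_A+t_B}{T}$ is exactly the linearization of the single-pair overlap probability $P^*$, scaled by $n_A$: the approximation should therefore emerge by discarding the second-order term $\frac{t_A^2+t_B^2}{2T^2}$ from the precise equation and then assembling the several occurrences multiplicatively.

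First I would record that, to first order in the small ratios $t_A/T$ and $t_B/T$, a single occurrence of $B$ fails to overlap a single occurrence of $A$ with probability $1 - \frac{t_A+t_B}{T}$; this is just $1 - P^*$ with the quadratic correction dropped. Next I would estimate the chance that one occurrence of $B$ avoids \emph{all} $n_A$ occurrences of $A$. Each occurrence of $A$ forbids a starting window of width $t_A+t_B$ for $B$ (cf.\ equation~(\ref{eq:possPositions})); treating these $n_A$ windows as disjoint and independent, a union bound -- equivalently the first-order truncation of inclusion--exclusion -- gives an overlap probability $n_A\frac{t_A+t_B}{T}$, hence a survival probability
\begin{equation*}
1 - n_A\,\frac{t_A+t_B}{T}
\end{equation*}
for a single $B$. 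Finally, modelling the $n_B$ occurrences of $B$ as independent trials, the probability that none of them overlaps any $A$ is the $n_B$-th power of this quantity, and complementing yields
\begin{equation*}
P \approx 1 - \left(1 - n_A\,\frac{t_A+t_B}{T}\right)^{n_B},
\end{equation*}
as claimed.

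The main obstacle is that all three simplifications are genuine approximations rather than identities, and the argument must make clear why they are admissible. The non-self-overlap condition forces occurrences of a single event to be separated, so the $n_A$ blocking windows are neither disjoint nor independent: the width-$(t_A+t_B)$ windows of two adjacent $A$-occurrences overlap, and the union bound consequently overcounts the configurations in which $B$ could meet two nearby occurrences of $A$. The same correlation afflicts the $n_B$ occurrences of $B$. I expect these dependencies, together with the discarded quadratic term and the boundary effects at the two ends of $T$, to be precisely the deviations later absorbed by the error term $E^\pm$ of the universal equation in Section~\ref{sec:unieq}. Accordingly I would present the statement as an approximation valid in the regime $t_A,t_B \ll T$ with $n_A,n_B$ moderate, where the forbidden windows occupy only a small fraction of $T$ and the multiplicative independence assumptions are most accurate.
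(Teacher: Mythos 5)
Your proposal is correct and follows essentially the same route as the paper: the paper likewise forms the per-occurrence survival probability $\bar p \approx 1 - n_A\frac{t_A+t_B}{T}$ from the $\frac{t_A}{\Delta}+\frac{t_B}{\Delta}$ forbidden starting positions of (\ref{eq:possPositions}) and then raises it to the power $n_B$ under an independence assumption. Your additional remarks on why the disjointness, independence, and boundary simplifications are admissible in the regime $t_A, t_B \ll T$ are a welcome refinement of the paper's terser treatment, but they do not change the argument.
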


\begin{proof}[Proof of Theorem]

Since $P = 1 - \bar{P}$, we will establish an equation for the probability $\bar{P}$ for the events not to occur at the same time at any point. Let $k \in \{ 0 , 1 , ... , t_B-1 \}$ number the $n_B$ occurrences of $B$. There are
\begin{equation}\label{eq:k}
\frac{T}{\Delta} - k \cdot \frac{t_B}{\Delta}
\end{equation}
possible positions for the $k$-th $B$ occurrence to start, but we consider $T \gg t_B$ and approximate (\ref{eq:k}) with $T / \Delta$. Every $B$ occurrence may overlap at about
\begin{equation*}
\frac{t_A}{\Delta} + \frac{t_B}{\Delta} 
\end{equation*}
positions (see (\ref{eq:possPositions})) with any occurrence of $A$. Thus we have for the probability $\bar{p}$ of a $B$ occurrence \emph{not} to overlap at some point with any $A$ occurrence
\begin{equation*}
\bar{p} \approx 1 - \dfrac{n_A \cdot \left( \frac{t_A}{\Delta} + \frac{t_B}{\Delta} \right)}{ T / \Delta } = 1 - n_A  \dfrac{t_A + t_B}{T}
\end{equation*}
To find $\bar{P}$ this has to be true for every occurrence of $B$ and we get
\begin{equation*}
P = 1 - \bar{P} = 1 - \left( \bar{p} \right)^{n_B} \approx 1 - \left( 1 - n_A \dfrac{t_A + t_B}{T} \right)^{n_B},
\end{equation*}
which confirms the approximation.
\end{proof}

\begin{center}
--- Example ---
\end{center}

\noindent \textit{Example I} --- John will see a blue car with a probability of about $83.85 ~\%$:
\begin{equation*}
P(120,3,1,5,10) \approx 1 -  \left( 1 - 5 \cdot \dfrac{3 + 1}{120} \right)^{10} \approx 0.8385 ~.
\end{equation*}
In Example I in Section \ref{sec:unieq} the error of this result will be shown to be insignificant.

\section{Universal Equation}
\label{sec:unieq}

\begin{theorem}\label{theo:uniq}
The probability of $A$ and $B$ to overlap at some point during the total time $T$, with $A$ and $B$ occurring $n_A$ and $n_B$ times for $t_A$ and $t_B$, is given by
\begin{equation*}
P(T,t_A,t_B,n_A,n_B) = 1 - \dfrac{ \left( T^+ - t_A n_A - t_B n_B \right)^{n_A + n_B} }{ \left( T^+ - t_A n_A  \right)^{n_A} \left( T^+ - t_B n_B \right)^{n_B} } \pm E^\pm
\end{equation*}
with $T^+ := T + \frac{t_A + t_B}{2}$ and a small error term
\begin{equation*}
0 \leq E^\pm \leq \dfrac{ \left( T^+ - t_A n_A - t_B n_B \right)^{n_A + n_B} }{ \left( T^+ - t_A n_A  \right)^{n_A} \left( T^+ - t_B n_B \right)^{n_B} } \cdot \left(  \left(  \dfrac{\alpha_A(\alpha_B + \tau ) }{ \alpha_B(\alpha_A + \tau) } \right)^{n_A+n_B}  - 1 \right)
\end{equation*}
with $\alpha_A := T + t_A - t_A n_A - t_B n_B$, $\alpha_B := \alpha_A - \frac{t_A-t_B}{2}$ and $\tau := \max \{ t_An_A , t_Bn_B \}$.
\end{theorem}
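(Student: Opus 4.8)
The plan is to compute the complementary probability $\bar P = 1-P$ by the discretise-and-count method of the precise equation and then let $\Delta\to 0$. I write $T'=T/\Delta$ for the number of start slots and set $a=t_A/\Delta$, $b=t_B/\Delta$. As sample space I take the \emph{independent} placements of the $n_A$ $A$-occurrences (pairwise non-self-overlapping by rule (e)) and the $n_B$ $B$-occurrences (likewise); independence makes the denominator factorise. One family of $n$ pairwise disjoint blocks of length $\ell$ among $T'$ slots is counted by the gap/stars-and-bars argument that generalises eq.~(\ref{eq:possPositions}) and Lemma~\ref{lem:gauss}, which in the notation of Section~\ref{sec:not} gives the two denominator factors $\binom{M_A+n_A}{n_A}$ and $\binom{M_B+n_B}{n_B}$.

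For the numerator I would use that a configuration has \emph{no} $A$--$B$ overlap exactly when all $n_A+n_B$ blocks are pairwise disjoint. Counting pairwise-disjoint placements of $n_A$ blocks of length $a$ and $n_B$ of length $b$ factors into the interleaving of the two types, $\binom{n_A+n_B}{n_A}$, times the distribution of the free slots into the $n_A+n_B+1$ gaps, $\binom{M+n_A+n_B}{n_A+n_B}$. Hence $\bar P$ is the ratio of $\binom{n_A+n_B}{n_A}\binom{M+n_A+n_B}{n_A+n_B}$ to $\binom{M_A+n_A}{n_A}\binom{M_B+n_B}{n_B}$. Expanding each binomial as a product, the combinatorial prefactors cancel; multiplying numerator and denominator by $\Delta^{\,n_A+n_B}$ turns the factors $\Delta(M+i)$, $\Delta(M_A+i)$, $\Delta(M_B+j)$ into quantities converging to $T-t_An_A-t_Bn_B$, $T-t_An_A$, $T-t_Bn_B$. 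This already reproduces the product structure of the main fraction in the statement, which I denote $Q$, but with $T$ in place of $T^+$ under the crudest (no-protrusion) bookkeeping.

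The shift $T\leadsto T^+=T+\frac{t_A+t_B}{2}$ is the heart of the matter and stems from the boundary convention (rule (c)): starts are confined to $[0,T')$ while blocks may protrude past the end, so the bulk count mis-estimates the admissible positions near both ends, just as the corrections $S_1,S_2$ did for $n_A=n_B=1$. I would make this precise by bracketing the honest per-factor base between the no-protrusion value and the protrusion value, whose limits differ by the block's half-duration; the symmetric representative of that interval is exactly $T^+$. The $n_A=n_B=1$ case is a clean consistency check: there $Q$ specialises to $\frac{T(t_A+t_B)-\frac12(t_A^2+t_B^2)}{T^2-\frac14(t_A-t_B)^2}$, which agrees with $P^*$ in the numerator and differs only by $\frac14(t_A-t_B)^2$ in the denominator, vanishing when $t_A=t_B$ and pinpointing the source of the error.

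Finally, for $E^\pm$ I would propagate this per-factor interval through the $n_A+n_B$ numerator factors and $n_A+n_B$ denominator factors. Each numerator base ranges between $\alpha_B=T^+-t_An_A-t_Bn_B$ and $\alpha_A=\alpha_B+\frac{t_A-t_B}{2}$, and the denominator bases are these shifted by $\tau$, since $\alpha_B+\tau=T^+-\min\{t_An_A,t_Bn_B\}$. Bounding the ratio of the two extreme products yields the factor $\bigl(\tfrac{\alpha_A(\alpha_B+\tau)}{\alpha_B(\alpha_A+\tau)}\bigr)^{n_A+n_B}$, so the deviation of the true $\bar P$ from $Q$ is at most $Q\bigl[(\cdots)^{n_A+n_B}-1\bigr]$, which is precisely $E^\pm$. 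I expect the main obstacle to be making the boundary bracketing rigorous for several coupled blocks: protruding blocks near the same end interact, unlike the single occurrence, and controlling that coupling uniformly in $\Delta$ is exactly what the crude but clean bound $E^\pm$ is designed to absorb.
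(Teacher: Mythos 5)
Your proposal follows essentially the same route as the paper: the same stars-and-bars counts $\binom{n_A+n_B}{n_A}\binom{M+n_A+n_B}{n_A+n_B}$ over $\binom{M_A+n_A}{n_A}\binom{M_B+n_B}{n_B}$, the same passage to the limit (your product expansion is just Lemma~\ref{lem:stirling} unwound), the same $\delta$-bracketing of the boundary correction yielding $T^+$, and the same extreme-product bound (the paper's Lemma~\ref{lem:bruch}) giving $E^\pm$. The $n_A=n_B=1$ consistency check is a nice addition not in the paper, and you correctly flag the one step both arguments leave informal, namely the rigorous justification of the end-of-interval shift for multiple interacting blocks.
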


\begin{lemma}\label{lem:stirling} For constant $y \in \mathbb{N}$ we have
\begin{equation*}
\lim_{X \longrightarrow \infty} \dfrac{(X+y)!}{X! X^y} = 1.
\end{equation*}
\end{lemma}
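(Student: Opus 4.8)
The plan is to reduce the factorial ratio to a finite product and then pass the limit through it termwise. The starting observation is that $(X+y)!/X!$ telescopes: cancelling the common factors of the two factorials leaves
\begin{equation*}
\frac{(X+y)!}{X!} = (X+1)(X+2)\cdots(X+y) = \prod_{j=1}^{y}(X+j).
\end{equation*}
This is a product of exactly $y$ factors, and here the hypothesis that $y \in \mathbb{N}$ is \emph{constant} is what makes the argument work: the number of factors is fixed and does not grow with $X$.

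Next I would divide by $X^y$ and distribute one copy of $X$ into each of the $y$ factors, obtaining
\begin{equation*}
\frac{(X+y)!}{X!\,X^y} = \prod_{j=1}^{y}\frac{X+j}{X} = \prod_{j=1}^{y}\left(1 + \frac{j}{X}\right).
\end{equation*}
Because this product has a fixed finite number of factors, the limit of the product equals the product of the limits. Each individual factor satisfies $1 + j/X \longrightarrow 1$ as $X \longrightarrow \infty$, so the whole product tends to $1^y = 1$, which is exactly the asserted limit.

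The computation is elementary and there is no genuine obstacle; the only point that deserves care is the interchange of limit and product, and this is legitimate precisely because $y$ is held constant so that only finitely many factors ever appear. (Were $y$ permitted to depend on $X$, the statement would fail in general, so the constancy assumption cannot be dropped.) As an alternative one could argue by induction on $y$: the base case $y=0$ gives the ratio identically equal to $1$, and the inductive step multiplies the ratio for $y$ by the factor $(X+y+1)/X \longrightarrow 1$, so the limit is preserved. I would nonetheless prefer the direct product computation above, since it is the shortest and makes the role of the hypothesis most transparent.
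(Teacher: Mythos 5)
Your proof is correct and follows exactly the same route as the paper: rewrite $(X+y)!/X!$ as the telescoped product $(X+1)\cdots(X+y)$, divide $X^y$ into the factors to get $\prod_{l=1}^{y}\left(1+\frac{l}{X}\right)$, and let each of the fixed finite number of factors tend to $1$. Your added remarks on why constancy of $y$ justifies interchanging limit and product merely make explicit what the paper leaves implicit.
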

\begin{proof}
The above fraction can be rearranged to
\begin{equation*}
\lim_{X \longrightarrow \infty} \dfrac{(X+1) \cdot ... \cdot (X+y)}{X^y} = \lim_{X \longrightarrow \infty} \prod^y_{l=1} \left( 1 +\dfrac{l}{X} \right) = 1.
\end{equation*}
\end{proof}

\begin{lemma}\label{lem:bruch} For constant $u,v \in \mathbb{R}$ with $u \leq v$ following inequality holds true:
\begin{equation*}
\dfrac{\alpha_B + u}{\alpha_A + u} \leq \dfrac{\alpha_B + v}{\alpha_A + v}.
\end{equation*}
\end{lemma}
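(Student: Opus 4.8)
The plan is to recognize that the claimed inequality is nothing more than the statement that the function $f(x) := \frac{\alpha_B + x}{\alpha_A + x}$ is monotonically non-decreasing in $x$, so that $u \leq v$ forces $f(u) \leq f(v)$. The decisive structural fact, which I would isolate first, is that by the definition $\alpha_B = \alpha_A - \frac{t_A - t_B}{2}$ we have
\begin{equation*}
\alpha_A - \alpha_B = \frac{t_A - t_B}{2} \geq 0,
\end{equation*}
the inequality holding because the labelling convention of Section \ref{sec:intro} fixes $t_A \geq t_B$. Thus $\alpha_B \leq \alpha_A$, and the whole lemma rests on this single sign.

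First I would reduce the claim to a polynomial inequality by clearing denominators. Assuming (as is the case in the application, where these quantities appear as denominators of genuine probabilities) that $\alpha_A + u > 0$ and $\alpha_A + v > 0$, multiplying the desired inequality through by the positive quantity $(\alpha_A + u)(\alpha_A + v)$ turns it into
\begin{equation*}
(\alpha_B + u)(\alpha_A + v) \leq (\alpha_B + v)(\alpha_A + u).
\end{equation*}
Expanding both sides and cancelling the common terms $\alpha_A \alpha_B$ and $uv$, the difference of the right-hand and left-hand sides collapses to $(v - u)(\alpha_A - \alpha_B)$. Since $v - u \geq 0$ by hypothesis and $\alpha_A - \alpha_B \geq 0$ by the observation above, this product is non-negative, which is exactly the inequality required.

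Alternatively, and perhaps more transparently, I would rewrite $f(x) = 1 - \frac{\alpha_A - \alpha_B}{\alpha_A + x}$; since the numerator $\alpha_A - \alpha_B$ is a non-negative constant and $\alpha_A + x$ is positive and increasing, the subtracted fraction decreases as $x$ grows, so $f$ increases. Either route delivers the result immediately. The only point requiring care — and the one I would state explicitly rather than gloss over — is the positivity of the denominators $\alpha_A + u$ and $\alpha_A + v$, since this is what fixes the direction of the cross-multiplication; without it the monotonicity could break across a sign change of the denominator. In the setting of Theorem \ref{theo:uniq} this positivity is guaranteed by the standing assumption that $T$ is large enough for the probabilities to be well defined, so no genuine obstacle arises and the argument remains entirely elementary.
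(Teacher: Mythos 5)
Your proposal is correct and takes essentially the same route as the paper: cross-multiply, cancel the common terms, and reduce the claim to $(v-u)(\alpha_A-\alpha_B)\geq 0$, which holds because $v\geq u$ and $\alpha_B = \alpha_A - \frac{t_A-t_B}{2} \leq \alpha_A$ by the convention $t_A \geq t_B$. If anything you are slightly more careful than the paper, which performs the cross-multiplication without remarking that the denominators must be positive for the inequality's direction to be preserved.
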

\begin{proof}
Rearranging the inequality yields
\begin{align}
\nonumber (\alpha_A + v) (\alpha_B + u)  &\leq  (\alpha_A + u) (\alpha_B + v) \\
\nonumber \alpha_Au + \alpha_Bv &\leq \alpha_Av + \alpha_Bu \\
\alpha_B (v - u)  &\leq  \alpha_A (v - u). \label{eq:ineq}
\end{align}
Inequality (\ref{eq:ineq}) holds true since $v-u \geq 0$ and $\alpha_B := \alpha_A - \frac{1}{2}t_A + \frac{1}{2}t_B \leq \alpha_A$ due to Definition (a) in Section \ref{sec:intro} that $t_A \geq t_B$.
\end{proof}

\begin{proof}[Proof of Theorem]
Similar to the approximation we determine the probability $\bar{P}$ for \emph{no} overlap to occur. Therefore we divide the time in parts of size $\Delta$ again and count the number of possible arrangements of all $A$ and $B$ occurrences. There are
\begin{equation*}
\frac{T}{\Delta} - \left(\frac{t_A}{\Delta} - 1\right) \cdot n_A
\end{equation*} 
positions for the $n_A$ occurrences of $A$ left, so that they do not overlap with themselves. Thus the number of possible arrangements of the $A$ occurrences is given by
\begin{equation}\label{eq:B}
\binom{\frac{T}{\Delta} - \left(\frac{t_A}{\Delta} - 1\right) \cdot n_A}{n_A}
\end{equation}
and for the B occurrences by
\begin{equation}\label{eq:BB}
\binom{\frac{T}{\Delta} - \left(\frac{t_B}{\Delta} - 1\right) \cdot n_B}{n_B}.
\end{equation}
The number of ways to order the $n_A$ occurrences of $A$ and the $n_B$ occurrences of $B$ is
\begin{equation}\label{eq:q}
\binom{n_A + n_B}{n_A}.
\end{equation}
Similar to (\ref{eq:B}) and (\ref{eq:BB}) we can express the number of ways to arrange the $A$ and $B$ occurrences per order of (\ref{eq:q}) as
\begin{equation}\label{eq:occorder}
\binom{\frac{T}{\Delta} - \left(\frac{t_A}{\Delta} - 1\right) \cdot n_A - \left(\frac{t_B}{\Delta} - 1\right) \cdot n_B}{n_A + n_B}.
\end{equation}
Since the probability $\bar{P}$ describes the ratio of arrangements without overlap\footnote{number of arrangements without overlap (\ref{eq:occorder}) per orders (\ref{eq:q})} to the total number of possible arrangements of the $A$ and $B$ occurrences, we have\footnote{see notation in Section \ref{sec:not}}
\begin{equation*}
\bar{P} = \lim_{\Delta \longrightarrow 0} \dfrac{\binom{n_A + n_B}{n_A} \binom{\frac{T}{\Delta} - \left(\frac{t_A}{\Delta} - 1\right) \cdot n_A - \left(\frac{t_B}{\Delta} - 1\right) \cdot n_B}{n_A + n_B}}{\binom{\frac{T}{\Delta} - \left(\frac{t_A}{\Delta} - 1\right) \cdot n_A}{n_A} \binom{\frac{T}{\Delta} - \left(\frac{t_B}{\Delta} - 1\right) \cdot n_B}{n_B}} = \lim_{\Delta \longrightarrow 0} \dfrac{\binom{n_A + n_B}{n_A} \binom{T' - N_A - N_B}{n_A + n_B} }{\binom{T' - N_A}{n_A} \binom{T' - N_B }{n_B}}.
\end{equation*}
Rewriting the binomial coefficients and rearrange the fraction yields
\begin{align}
\nonumber \bar{P} &= \lim_{\Delta \longrightarrow 0} \dfrac{ (n_A + n_B)!  (T' - N_A - N_B)! n_A! (T' - N_A - n_A)! n_B! (T' - N_B - n_B)! }{(T' - N_A)!(T' - N_B)!n_A!n_B!(n_A+n_B)!(T' - N_A - N_B - n_A - n_B)! } \\[0.8em]
\nonumber &= \lim_{\Delta \longrightarrow 0} \dfrac{ (T' - N_A - N_B)! (T' - N_A - n_A)! (T' - N_B - n_B)! }{(T' - N_A)! (T' - N_B)! (T' - N_A - N_B - n_A - n_B)! } \\[0.8em]
&= \lim_{\Delta \longrightarrow 0} \dfrac{ (M + n_A + n_B)! M_A! M_B! }{ (M_A + n_A)! (M_B + n_B)! M! }. \label{eq:MMM}
\end{align}
Due to the limit we have $\Delta \longrightarrow 0$ and thus $M_A, M_B, M \longrightarrow \infty$. Therefore we are allowed to apply Lemma \ref{lem:stirling} on (\ref{eq:MMM}):
\begin{align*}
\bar{P} &= \lim_{\Delta \longrightarrow 0}  \dfrac{ M! M^{n_A + n_B} M_A! M_B! }{ M_A! M_A^{n_A} M_B!  M_B^{n_B} M! } = \lim_{\Delta \longrightarrow 0} \dfrac{ M^{n_A + n_B} }{ M_A^{n_A} M_B^{n_B} } \\[0.8em]
&= \lim_{\Delta \longrightarrow 0} \dfrac{ \left( T' - N_A - n_A - N_B - n_B \right)^{n_A + n_B} }{ \left(T' - N_A - n_A \right)^{n_A} \left(T' - N_B - n_B \right)^{n_B} } \\[0.8em]
&= \lim_{\Delta \longrightarrow 0} \dfrac{ \left( \frac{T}{\Delta} - \frac{t_A}{\Delta} n_A - \frac{t_B}{\Delta} n_B \right)^{n_A + n_B} }{ \left(\frac{T}{\Delta} - \frac{t_A}{\Delta} n_A  \right)^{n_A} \left(\frac{T}{\Delta} - \frac{t_B}{\Delta} n_B \right)^{n_B} } \\[0.8em] 
&= \dfrac{ \left( T - t_A n_A - t_B n_B \right)^{n_A + n_B} }{ \left( T - t_A n_A  \right)^{n_A} \left( T - t_B n_B \right)^{n_B} }.
\end{align*}
This equation yields the probability in case that the occurrences may not overlap the total time. Since we defined in (c) in Section \ref{sec:intro} the occurrences may overlap with the end of the total time, we have to extend the total time $T$ by some $\delta > 0$:
\begin{equation*}
\bar{P}_\delta := \dfrac{ \left( T + \delta - t_A n_A - t_B n_B \right)^{n_A + n_B} }{ \left( T + \delta - t_A n_A  \right)^{n_A} \left( T + \delta - t_B n_B \right)^{n_B} }
\end{equation*}
Obviously, $t_B \leq \delta \leq t_A$ and we choose $\delta^* := \frac{t_A + t_B}{2}$. Therefore let us redefine
\begin{equation*}
\bar{P} := \bar{P}_{\delta^*} \pm E^\pm = \dfrac{ \left( T^+ - t_A n_A - t_B n_B \right)^{n_A + n_B} }{ \left( T^+ - t_A n_A  \right)^{n_A} \left( T^+ - t_B n_B \right)^{n_B} } \pm E^\pm
\end{equation*}
with $0 \leq E^\pm$ as small error term. $E^\pm$ approaches $0$ if $\vert t_A - t_B\vert$ approaches $0$ or if the ratio between $T$ and the event time $t_An_A + t_Bn_B$ increases\footnote{Notation: '$\sim$' denotes 'proportional to'}
\begin{equation*}
E^\pm \sim \vert t_A - t_B \vert \cdot \dfrac{ t_An_A + t_Bn_B }{T}.
\end{equation*}
In order to determine the maximum error we consider, that the probability for an overlap decreases with longer $T$. Since we defined $t_A \geq t_B$ (see Definition (a) in Section \ref{sec:intro}), we have $P_{\delta^*} - P_{t_A} \geq P_{t_B} - P_{\delta^*}$ and the maximum error is given by
\begin{align}
\nonumber E^\pm &\leq P_{\delta^*} - P_{t_A} = (1- \bar{P}_{\delta^*}) - (1 -\bar{P}_{t_A} ) = \bar{P}_{\delta^*} \cdot \left( \dfrac{\bar{P}_{t_A}}{\bar{P}_{\delta^*}} - 1 \right) \\
\nonumber &= \bar{P}_{\delta^*} \cdot \left( \left( \dfrac{\alpha_A }{ \alpha_B} \right)^{n_A+n_B} \cdot \left( \dfrac{\alpha_B + t_Bn_B }{ \alpha_A + t_Bn_B } \right)^{n_A} \cdot \left( \dfrac{\alpha_B + t_An_A }{ \alpha_A + t_An_A } \right)^{n_B} - 1 \right) \\ 
\nonumber &\leq \bar{P}_{\delta^*} \cdot \left( \left( \dfrac{\alpha_A }{ \alpha_B} \right)^{n_A+n_B} \cdot \left( \dfrac{\alpha_B + \tau }{ \alpha_A + \tau} \right)^{n_A+n_B} - 1 \right) \tag{see Lemma \ref{lem:bruch}} \\
\nonumber &= \bar{P}_{\delta^*} \cdot \left(  \left(  \dfrac{\alpha_A(\alpha_B + \tau ) }{ \alpha_B(\alpha_A + \tau ) } \right)^{n_A+n_B}  - 1 \right).
\end{align}
Finally the probability for an overlap is given by $P = 1 - \bar{P}$, which concludes the proof.
\end{proof}

\begin{center}
--- Example ---
\end{center}

\noindent \textit{Example I} --- John will see a blue car with a probability of about $85.46 \pm 1.77 ~\%$. Since $T^+ = 120 + \frac{3+1}{2}  = 122$, $\alpha_A = 120 + 3 - 3 \cdot 5 - 1 \cdot 10 = 98$ and $\alpha_B = 98 - \frac{3 - 1}{2}  = 97$, we have
\begin{equation*}
P(120,3,1,5,10) = 1 - \dfrac{ \left( 122 - 3 \cdot 5 - 1 \cdot 10 \right)^{5 + 10} }{ \left( 122 - 3 \cdot 5  \right)^{5} \left( 122 - 1 \cdot 10 \right)^{10} } \pm E^\pm  \approx 0.8546 \pm E^\pm
\end{equation*}
and, due to $\max\{ 3 \cdot 5 , 1 \cdot 10 \} = 3 \cdot 5$,
\begin{equation*}
E^\pm \leq 85.46 ~\%  \cdot  \left(  \left(  \dfrac{98 \cdot (97 + 3 \cdot 5) }{ 97 \cdot (98 + 3 \cdot 5) } \right)^{5 + 10}  - 1 \right) \approx 1.77~ \% ~.
\end{equation*}
This confirms the precision of the approximation in Section \ref{sec:app}. \\

\noindent \textit{Example II} --- Two events with $t_A = t_B = 1~sec$ will overlap during a total time $T = 3~sec$ at some point with a probability of about $55.56 ~\%$. Since $T^+ = 3 + \frac{1+1}{2} = 4$, we have
\begin{equation*}
P(3,1,1,1,1) = 1 - \dfrac{ \left( 4 - 1 \cdot 1 - 1 \cdot 1 \right)^{1+1} }{ \left( 4 - 1 \cdot 1  \right) \left( 4 - 1 \cdot 1 \right) } \pm 0  \approx 0.5555 \pm 0~
\end{equation*}
without any error due to $\vert t_A - t_B \vert = \vert 1 - 1 \vert = 0$. This result is consistent with Section \ref{sec:preciseEqn}.

\section{ Remark: $n_A$ and $n_B$ as random parameters}\label{sec:nn}

Instead of determining $n_A$ and $n_B$ as precise numbers of occurrences, it is more natural to define $\rho_A$ and $\rho_B$, which represent the probability $p$ for the event to occur in a given time $s$: $\rho := p / s$. In order to calculate the probability for a possible overlap, the number of expected occurrences $n = \rho \cdot T$ has to be calculated first. The universal equation would simply change to
\begin{equation*}
P(T,t_A,t_B,\rho_A,\rho_B) = 1 - \dfrac{ \left( T^+ - t_A T \rho_A - t_B T \rho_B \right)^{T \rho_A + T \rho_B} }{ \left( T^+ - t_A T \rho_A  \right)^{T \rho_A} \left( T^+ - t_B T \rho_B \right)^{T \rho_B} } \pm E^\pm.
\end{equation*}

\textbf{} \\  \noindent\textbf{Acknowledgement} --- 
The author would like to thank T. Andrews for his advice and helpful comments as well as Alice de Saupaio Kalkuhl for reviewing this paper. 

\nocite{ma}
\bibliography{Paper} 
\bibliographystyle{plain}

\end{document}